\newcommand{\Cb}{{\mathbb C}}
\newcommand{\Nb}{{\mathbb N}}
\newcommand{\pa}{\|}
\newtheorem{theorem}{Theorem}[section]
\newtheorem{lemma}[theorem]{Lemma}
\newtheorem{proposition}[theorem]{Proposition}
\theoremstyle{definition}
\begin{document}

\title{A Hilbert $C^*$-module admitting no frames}
\author{Hanfeng Li}
\thanks{Partially supported by NSF Grant DMS-0701414.}
\address{Department of Mathematics \\
SUNY at Buffalo \\
Buffalo, NY 14260-2900, U.S.A.} \email{hfli@math.buffalo.edu}
\date{September 29, 2009}

\subjclass[2000]{Primary 46L08; Secondary 42C15}

\begin{abstract}
We show that every infinite-dimensional commutative unital $C^*$-algebra has a Hilbert $C^*$-module
admitting no frames. In particular, this shows that Kasparov's stabilization
theorem for countably generated Hilbert $C^*$-modules can not be extended to arbitrary Hilbert $C^*$-modules.
\end{abstract}

\maketitle



\section{Introduction} \label{introduction:sec}

Kasparov's celebrated stabilization theorem \cite{Kasparov} says that for any $C^*$-algebra $A$ and any
countably generated (right)
Hilbert $A$-module $X_A$, the direct sum $X_A\oplus H_A$ is isomorphic to $H_A$ as Hilbert
$A$-modules, where $H_A$ denotes the standard Hilbert $A$-module $\oplus_{j\in \Nb}A_A$ (see Section~\ref{proof:sec} below for definition).
This theorem plays an important role in Kasparov's $KK$-theory.

There has been some generalization of Kasparov's stabilization theorem to a larger class of Hilbert $C^*$-modules \cite{RT}.
It is natural to ask whether Kasparov's stabilization theorem can be generalized to arbitrary Hilbert $A$-modules
via replacing $H_A$ by $\oplus_{j\in J}A_A$ for some large set $J$ depending on $X_A$. In other words,
given any Hilbert $A$-module $X_A$, is $X_A\oplus (\oplus_{j\in J}A_A)$ is isomorphic to $\oplus_{j\in J}A_A$ as Hilbert
$A$-modules for some set $J$?

An affirmative answer to the above question would imply that $X_A$ is a direct summand of $\oplus_{j\in J}A_A$, i.e,
$X_A\oplus Y_A$ is isomorphic to $\oplus_{j\in J}A_A$ for some Hilbert $A$-module $Y_A$.

In \cite{FL02} Frank and Larson generalized the classical frame theory from Hilbert spaces to the setting of Hilbert $C^*$-modules.
Given a unital $C^*$-algebra $A$ and a Hilbert $A$-module $X_A$, a set $\{x_j: j\in J\}$ of elements in $X_A$
is called
a {\it frame} of $X_A$ \cite[Definition 2.1]{FL02} if there is a real constant $C>0$ such that
$\sum_{j\in J}\left<x, x_j\right>_A \left<x_j, x\right>_A$ converges in the ultraweak operator topology to some element
in the universal enveloping von Neumann algebra $A^{**}$ of $A$ \cite[page 122]{Takesaki} and
\begin{eqnarray} \label{frame0:eq}
C\left<x, x\right>_A\le \sum_{j\in J}\left<x, x_j\right>_A\left<x_j, x\right>_A\le C^{-1}\left<x, x\right>_A
\end{eqnarray}
for every $x\in X_A$.
It is called a {\it standard frame} of $X_A$ if furthermore
$\sum_{j\in J}\left<x, x_j\right>_A\left<x_j, x\right>_A$ converges in norm for every $x\in X_A$.
Frank and Larson showed that a Hilbert $A$-module $X_A$ has a standard frame if and only if
$X_A$ is a direct summand of $\oplus_{j\in J}A_A$ for some set $J$ \cite[Example 3.5, Theorems 5.3 and 4.1]{FL02}.
From Kasparov's stabilization theorem they concluded that every countably generated Hilbert $A$-module has a standard frame.
However, the existence of standard frames for general Hilbert $A$-modules was left open.
In fact, even the existence of frames for general Hilbert $A$-modules is open, as Frank and Larson
asked in Problem 8.1 of \cite{FL02}.

The purpose of this note is to show that the answers to these questions are in general negative, even for
every infinite-dimensional commutative unital $C^*$-algebra:

\begin{theorem} \label{main:thm}
Let $A$ be a unital commutative $C^*$-algebra. Then the following are equivalent:
\begin{enumerate}
\item $A$ is finite-dimensional,

\item for every Hilbert $A$-module $X_A$, $X_A\oplus (\oplus_{j\in J}A_A)$ is isomorphic to $\oplus_{j\in J}A_A$ as Hilbert $A$-modules
for some set $J$,

\item for every Hilbert $A$-module $X_A$, $X_A\oplus Y_A$ is  isomorphic to $\oplus_{j\in J}A_A$ as Hilbert $A$-modules
for some set $J$ and some Hilbert $A$-module $Y_A$,

\item every Hilbert $A$-module $X_A$ has a  standard frame,

\item every Hilbert $A$-module $X_A$ has a frame.
\end{enumerate}
\end{theorem}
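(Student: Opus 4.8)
The plan is to prove the cycle $(1)\Rightarrow(2)\Rightarrow(3)\Rightarrow(4)\Rightarrow(5)\Rightarrow(1)$, with essentially all of the work concentrated in the last implication. The implications $(2)\Rightarrow(3)$ and $(4)\Rightarrow(5)$ are immediate (take $Y_A=\oplus_{j\in J}A_A$, and note that a standard frame is a frame), while $(3)\Rightarrow(4)$ is exactly the Frank--Larson characterization recalled above: $X_A$ is an orthogonal direct summand of some $\oplus_{j\in J}A_A$ if and only if it admits a standard frame. For $(1)\Rightarrow(2)$ I would write $A\cong\Cb^k$, so that a Hilbert $A$-module $X_A$ splits as an orthogonal sum $X_A\cong\bigoplus_{i=1}^k H_i$ of Hilbert spaces $H_i$ (the ranges of the $k$ minimal central projections), and $\oplus_{j\in J}A_A\cong\bigoplus_{i=1}^k\ell^2(J)$. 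Choosing $J$ with $|J|$ infinite and $|J|\ge\dim H_i$ for every $i$, the Hilbert space isomorphisms $H_i\oplus\ell^2(J)\cong\ell^2(J)$ assemble into the desired isomorphism $X_A\oplus(\oplus_{j\in J}A_A)\cong\oplus_{j\in J}A_A$.

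The whole content is therefore $(5)\Rightarrow(1)$, which I would prove in contrapositive form: assuming $A=C(\Omega)$ is infinite-dimensional, i.e.\ $\Omega$ is an infinite compact Hausdorff space, construct a Hilbert $A$-module admitting no frame. First I would fix a non-isolated point $\omega_0\in\Omega$ (one exists, since a compact space all of whose points are isolated is finite) and, by iterating the Hausdorff separation of $\omega_0$ from a nearby point, produce a sequence $(V_n)$ of pairwise disjoint nonempty open sets together with points $p_n\in V_n$ accumulating at $\omega_0$; this replaces the ``convergent sequence'' picture and is available for every infinite $\Omega$.

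The key reduction is a localization lemma: if $\{x_j:j\in J\}$ is a frame of a Hilbert $A$-module $X_A$ with bounds $C,C^{-1}$ as in (\ref{frame0:eq}), then for every character $\omega\in\Omega$ the images $\{x_j(\omega)\}$ in the fibre Hilbert space $X_\omega=X_A\otimes_\omega\Cb$ form a frame of $X_\omega$ with the \emph{same} bounds $C,C^{-1}$, independently of $\omega$. I would prove this by applying the normal (ultraweakly continuous) extension of the state $\omega$ to the inequality in $A^{**}$: evaluation is multiplicative on $A$, so $\omega(\langle x,x_j\rangle_A\langle x_j,x\rangle_A)=|\langle x_j(\omega),x(\omega)\rangle|^2$, and ultraweak continuity lets the extended state pass through the sum, turning (\ref{frame0:eq}) into $C\|v\|^2\le\sum_j|\langle x_j(\omega),v\rangle|^2\le C^{-1}\|v\|^2$ for all $v\in X_\omega$ (using that the continuous sections surject onto each fibre).

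The main obstacle, and the heart of the argument, is to build $X_A$ so that this uniformity is impossible, and then to verify that no frame exists. I would make $X_A$ a module that is \emph{not} countably generated---so that Kasparov's theorem, which already forces a standard frame in the countably generated case, does not apply---by arranging the fibre $X_{\omega_0}$ at the non-isolated point $\omega_0$ to be nonseparable, with the nonseparability carried continuously along the sets $V_n$ rather than splitting off as a direct summand (at $\omega_0$ there is no projection with which to split it). The design should force every continuous section that is non-negligible at $\omega_0$ to remain non-negligible, and to \emph{overlap}, on the fibres over the $V_n$ as $n\to\infty$. Granting the localization lemma, a frame would then supply uncountably many sections that are simultaneously large near $\omega_0$ yet constrained by the fibrewise upper bound $\sum_j|\langle x_j(\omega),v\rangle|^2\le C^{-1}\|v\|^2$; the tension between continuity into a nonseparable fibre at $\omega_0$ and this uniform upper bound on the approaching fibres is what I expect to yield the contradiction. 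Making the gluing precise enough that this overlap is genuinely forced---and ruling out clever frames that avoid lifting an orthonormal system of $X_{\omega_0}$ directly---is the delicate step, and is where the bulk of the estimation will go.
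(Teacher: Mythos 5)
Your treatment of $(1)\Rightarrow(2)$, $(2)\Rightarrow(3)$, $(3)\Rightarrow(4)$, $(4)\Rightarrow(5)$ matches the paper, and your localization lemma is exactly the paper's Proposition~\ref{frame:prop} combined with evaluation states (the multiplicativity computation $\omega(\left<x,x_j\right>_A\left<x_j,x\right>_A)=|\left<x_j(\omega),x(\omega)\right>|^2$ is precisely how Lemma~\ref{no frame:lemma} begins). But there are two genuine gaps in $(5)\Rightarrow(1)$. The first is the topological preliminary: it is \emph{false} that for every infinite compact Hausdorff $\Omega$ one can fix a non-isolated point $\omega_0$ and then find points $p_n$ accumulating at it. A weak P-point of $\beta\Nb\setminus\Nb$ (which exists in ZFC, by Kunen) is non-isolated yet lies in the closure of no countable subset of its complement, so iterated Hausdorff separation around such an $\omega_0$ cannot produce your sequence. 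The quantifiers must be reversed: choose the countable set first and only then a cluster point outside it. The paper does this indirectly and more robustly: Lemma~\ref{infinite value:lemma} produces a continuous $f:Z\to[0,1]$ with infinite image (itself requiring an argument, since $Z$ need not be metrizable or contain any nontrivial convergent sequence), a convergent sequence is found in the \emph{metrizable} image $f(Z)$, and the field is pulled back along $f$ --- which also settles what the fibres are on all of $Z$, something your direct gluing over the $V_n$ never specifies.

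The second and more serious gap is the one you acknowledge yourself: the construction of the module and the derivation of the contradiction --- the entire content of the theorem --- are deferred to ``the delicate step'' where ``the bulk of the estimation will go.'' The paper's construction hinges on a specific combinatorial device (Lemma~\ref{uncountable:lemma}): an uncountable family $S$ of injective maps $\Nb\to\Nb$ with $f(n)\neq g(m)$ for $n\neq m$ and $f(n)\neq g(n)$ eventually. This yields, inside one separable Hilbert space, uncountably many unit-norm continuous paths $t\mapsto v_{f,t}$ (rotating from $e_{f(n)}$ to $e_{f(n+1)}$ on $[1/(n+1),1/n]$) that become pairwise orthogonal as $t\to 0$, which is exactly what makes $\|\sum_j\lambda_j x_{f_j}(t)\|$ continuous at $t=0$ with limit $(\sum_j|\lambda_j|^2)^{1/2}$, so that Dixmier's completion procedure produces a genuine continuous field with nonseparable fibre at $0$ and separable fibres elsewhere; without this (or an equivalent device) your gluing has no reason to yield a continuous field at all. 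Moreover, the paper's contradiction is not the one you envisage (uncountably many sections simultaneously large versus the upper bound): it is a counting argument. The upper fibrewise bound forces, at each separable fibre $z\in W$, only countably many $x_j(z)$ to be nonzero; hence there is a unit vector $w$ in the nonseparable fibre at $z_\infty$ orthogonal to $x_j(z_\infty)$ for all $j$ in the countable exceptional set $F$, while for $j\notin F$ the section $x_j$ vanishes on $W$ and therefore, by norm-continuity of sections and $z_\infty\in\overline{W}$, vanishes at $z_\infty$ as well --- violating the \emph{lower} frame bound $C\|w\|^2\le\sum_j|\left<x_j(z_\infty),w\right>|^2=0$. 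Your sketched ``tension'' does not obviously close into a contradiction, whereas this counting argument does, and it is the missing idea.
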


In Section~\ref{continuous field:sec} we establish some result on continuous fields of Hilbert spaces.
Theorem~\ref{main:thm} is proved
in Section~\ref{proof:sec}.

\noindent{\it Acknowledgements.} I am grateful to Michael Frank, Cristian Ivanescu, David Larson, and Jingbo Xia for helpful comments.

\section{Continuous fields of Hilbert spaces} \label{continuous field:sec}

In this section we prove Proposition~\ref{field on compact:prop}.

\begin{lemma} \label{uncountable:lemma}
There exists an uncountable set $S$ of injective maps $\Nb\rightarrow \Nb$ such that for any distinct $f,g \in S$, $f(n)\neq g(n)$ for all but finitely many $n\in \Nb$, and $f(n)\neq g(m)$ for all $n\neq m$.
\end{lemma}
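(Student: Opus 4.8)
The plan is to realize $\Nb$ as the countable set of all finite binary strings and to take $S$ to be the family of branches of the infinite binary tree, \emph{graded by length}. Concretely, fix a bijection between $\Nb$ and the set $T$ of finite sequences of $0$'s and $1$'s, so that it suffices to produce an uncountable family of injective maps $\Nb\to T$ with the two stated properties. For each infinite binary sequence $x\in \{0,1\}^{\Nb}$ I set $f_x(n)$ to be the initial segment of $x$ of length $n$, an element of $T$. Since there are uncountably many such $x$ and distinct $x$ visibly give distinct maps, this produces an uncountable set $S=\{f_x\}$.

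Then I would verify the two properties. First, the injectivity of each $f_x$ and the requirement $f_x(n)\neq f_y(m)$ for $n\neq m$ both follow from a single observation: $f_x(n)$ is a string of length $n$, so strings arising from different indices are automatically distinct, regardless of the branches $x,y$. This is the key design choice, and it makes the off-diagonal condition free. Second, for distinct $x,y$ let $k$ be the first position at which they differ; then the initial segments of length $n\le k$ coincide while those of length $n>k$ differ, so $f_x(n)=f_y(n)$ for at most finitely many $n$ and hence $f_x(n)\neq f_y(n)$ for all but finitely many $n$. This is precisely the almost-disjointness coming from the branching of the tree.

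Transporting everything back through the fixed bijection $T\cong \Nb$ preserves all equalities and inequalities, hence preserves injectivity together with both properties, and yields the desired family $S$ of maps $\Nb\to\Nb$. I do not expect a genuine obstacle here: the only real content is selecting the right index set, and the binary tree (equivalently, any standard uncountable almost-disjoint family refined so as to respect a length grading) does the job. The single point deserving a line of care is confirming compatibility of the length grading with the off-diagonal requirement across two different functions, but as noted this is immediate since $f_x(n)$ and $f_y(m)$ have lengths $n$ and $m$.
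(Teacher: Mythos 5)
Your proof is correct and is essentially the paper's own argument: both encode initial segments of infinite sequences as elements of $\Nb$ (the paper takes $x\in\Nb^{\Nb}$ and an injection $\cup_{n\in\Nb}\Nb^n\to\Nb$, setting $f_x(n)=T(x(1),\dots,x(n))$; you take binary branches, which changes nothing), with the length grading making injectivity and the off-diagonal condition $f(n)\neq g(m)$ automatic and the first point of divergence giving $f_x(n)\neq f_y(n)$ for all large $n$. One harmless off-by-one: if $x$ and $y$ first differ at position $k$, the initial segments of length $k$ already differ, so agreement holds only for $n<k$ rather than $n\le k$ --- this does not affect the conclusion.
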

\begin{proof} Take an injective map $T$ from $\cup_{n\in \Nb} \Nb^n$ into $\Nb$. For each $x: \Nb\rightarrow \Nb$
define $f_x: \Nb\rightarrow \Nb$ by $f_x(n)=T(x(1), x(2), \dots, x(n))$ for all $n\in \Nb$. If $x\neq y$, say, $x(m)\neq y(m)$ for
some $m\in \Nb$, then $f_x(n)\neq f_y(n)$ for all $n\ge m$. Now the set $S:=\{f_x\in \Nb^{\Nb}: x\in \Nb^{\Nb}\}$ satisfies the requirement.
\end{proof}

We refer the reader to \cite[Chapter 10]{Dixmier} for details on continuous fields of Banach spaces.
Let $T$ be a topological space. Recall that a {\it continuous field of (complex) Banach spaces over $T$} is a family
$(H_t)_{t\in T}$ of complex Banach spaces,  with a set $\Gamma\subseteq \prod_{t\in T}H_t$ of sections such that:
 \begin{enumerate}
 \item[(i)] $\Gamma$ is a linear subspace of $\prod_{t\in T}H_t$,

 \item[(ii)] for every $t\in T$, the set of $x(t)$ for $x\in \Gamma$ is dense in $H_t$,

 \item[(iii)] for every $x\in \Gamma$ the function $t\mapsto \pa x(t)\pa$ is continuous on $T$,

 \item[(iv)] for any $x\in \prod_{t\in T}H_t$, if for every $t\in T$ and every $\varepsilon>0$ there exists
 an $x'\in \Gamma$ with $\pa x'(s)-x(s)\pa<\varepsilon$ for all $s$ in some neighborhood of $t$, then $x\in \Gamma$.
 \end{enumerate}

\begin{lemma} \label{field on interval:lemma}
For each $s\in [0, 1]$ there exists a continuous field of Hilbert spaces $((H_t)_{t\in [0, 1]}, \Gamma)$ over
$[0, 1]$ such that $H_t$ is
separable for every $t\in [0, 1]\setminus \{s\}$ and $H_s$
is nonseparable.
\end{lemma}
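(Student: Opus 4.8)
The plan is to place the single nonseparable fiber at $s$ and realize every other fiber as a closed subspace of the separable space $\ell^2(\Nb)$, using the uncountable almost-disjoint family $S$ from Lemma~\ref{uncountable:lemma} to index an orthonormal basis of $H_s$. Let $\{e_j\}_{j\in\Nb}$ be the standard orthonormal basis of $\ell^2(\Nb)$ and $\{\delta_f\}_{f\in S}$ that of $\ell^2(S)$. First I would choose a strictly monotone sequence $(t_n)$ in $[0,1]\setminus\{s\}$ with $t_n\to s$ (one such sequence when $s$ is an endpoint, and one from each side when $s$ is interior). For each $f\in S$ I define a section $x_f$ by $x_f(s)=\delta_f$ and, on each interval $[t_{n+1},t_n]$ between consecutive sample points, by the sliding formula $x_f(t)=\cos\theta_n(t)\,e_{f(n)}+\sin\theta_n(t)\,e_{f(n+1)}$, where $\theta_n\colon[t_{n+1},t_n]\to[0,\pi/2]$ is continuous with $\theta_n(t_n)=0$ and $\theta_n(t_{n+1})=\pi/2$; beyond $t_1$ (the sample point farthest from $s$) I extend $x_f$ by the constant value $e_{f(1)}$, and symmetrically on the other side when $s$ is interior. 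Injectivity of $f$ guarantees $e_{f(n)}\perp e_{f(n+1)}$, so $\|x_f(t)\|=1$ throughout the sliding region, and the definitions agree at the shared endpoints $t_{n+1}$, so each $x_f$ is a well-defined continuous $\ell^2(\Nb)$-valued map on $[0,1]\setminus\{s\}$.

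Next I would define the fibers by $H_t:=\overline{\mathrm{span}}\{x_f(t):f\in S\}$. This makes density (axiom~(ii)) automatic, and since each $x_f(t)$ for $t\neq s$ is a finite combination of the $e_j$, every such $H_t$ is a closed subspace of $\ell^2(\Nb)$, hence separable, while $H_s=\ell^2(S)$ is nonseparable because $S$ is uncountable. It then remains to produce the section space $\Gamma$ and verify axioms (i)--(iv); for this I would invoke the standard field-construction result of \cite[Chapter~10]{Dixmier} applied to the complex-linear space $\Lambda:=\mathrm{span}_{\Cb}\{x_f:f\in S\}$. Its hypotheses are density in each fiber (true by construction) together with continuity of $t\mapsto\langle x_f(t),x_g(t)\rangle$ for all $f,g\in S$; granting these, $\Gamma$ is obtained as the set of locally uniform limits of elements of $\Lambda$, which supplies the full continuous-field structure.

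The crux, and the only place the fine structure of $S$ is used, is continuity of $t\mapsto\langle x_f(t),x_g(t)\rangle$ at $t=s$, i.e. that it tends to $\langle\delta_f,\delta_g\rangle$. Away from $s$ continuity is clear, and for $f=g$ the limit at $s$ is immediate from $\|x_f(t)\|\equiv 1$ near $s$. The delicate case is $f\neq g$, where I must show the inner product vanishes near $s$: on $[t_{n+1},t_n]$ the vectors $x_f(t)$ and $x_g(t)$ lie in $\mathrm{span}\{e_{f(n)},e_{f(n+1)}\}$ and $\mathrm{span}\{e_{g(n)},e_{g(n+1)}\}$, so $\langle x_f(t),x_g(t)\rangle=0$ as soon as $\{f(n),f(n+1)\}\cap\{g(n),g(n+1)\}=\emptyset$. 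This is exactly where Lemma~\ref{uncountable:lemma} is tailored to do the work: the property $f(k)\neq g(k)$ for all large $k$ separates $f(n)$ from $g(n)$ and $f(n+1)$ from $g(n+1)$, while $f(k)\neq g(m)$ for $k\neq m$ separates the cross terms $f(n)$ from $g(n+1)$ and $f(n+1)$ from $g(n)$. Hence for all sufficiently large $n$ the two two-element supports are disjoint, forcing $\langle x_f(t),x_g(t)\rangle=0$ for $t$ near $s$ and giving the required limit $0$.

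I expect this combinatorial separation to be the main obstacle, in the sense that the entire continuity-at-$s$ verification hinges on both properties of $S$ holding simultaneously — which is precisely why Lemma~\ref{uncountable:lemma} was set up that way. The remaining points (agreement at endpoints, the constant extension away from $s$, and handling interior $s$ by treating the two sides identically) are routine and I would dispatch them quickly.
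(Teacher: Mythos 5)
Your proposal is correct and takes essentially the same route as the paper: sliding unit vectors $\cos\theta\,e_{f(n)}+\sin\theta\,e_{f(n+1)}$ indexed by the family $S$ of Lemma~\ref{uncountable:lemma}, fibers defined as closed spans with $H_s$ an $\ell^2$-space on $S$, the combinatorial properties of $S$ forcing orthogonality of $x_f(t)$ and $x_g(t)$ near $s$, and \cite[Proposition 10.2.3]{Dixmier} supplying $\Gamma$. The only cosmetic differences are that you check continuity of pairwise inner products where the paper checks continuity of $t\mapsto\|y(t)\|$ for finite linear combinations $y$ (equivalent, by expanding the norm), and that you handle general $s$ directly while the paper treats $s=0$ and notes the general case is similar.
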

\begin{proof} We consider the case $s=0$. The case $s>0$ can be dealt with similarly.
Let $H$ be an infinite-dimensional separable Hilbert space. Take an orthonormal
basis $\{e_n\}_{n\in \Nb}$ of $H$.
Let $S$ be as in Lemma~\ref{uncountable:lemma}.
For every $f\in S$ and every $t\in (0, 1]$
set $v_{f, t}\in H$ by
$$v_{f, t}=\cos(\frac{1/n-t}{1/n-1/(n+1)}\cdot \frac{\pi}{2})e_{f(n)}+\sin(\frac{1/n-t}{1/n-1/(n+1)}\cdot \frac{\pi}{2})e_{f(n+1)}$$
for $1/(n+1)\le t\le 1/n$. Set $H_t$ to be the closed linear span of $\{v_{f, t}: f\in S\}$ in $H$ for each $t\in (0, 1]$.
Let $H_0$ be a Hilbert space with an orthonormal basis $\{e'_f\}_{f\in S}$ indexed by $S$.
Then $H_t$ is separable for each $t\in (0, 1]$ while $H_0$ is nonseparable.

For each $f\in S$, consider the section $x_f\in \prod_{t\in [0, 1]}H_t$ defined by
$x_f(t)=v_{f, t}$ for $t\in (0, 1]$, and $x_f(0)=e'_f$. Then $x_f(t)$ is a unit vector in $H_t$ for every $t\in [0, 1]$, and the map $t\mapsto x_f(t)\in H$ is continuous on $(0, 1]$. Denote by $V$ the linear span of
$\{x_f: f\in S\}$ in $\prod_{t\in [0, 1]}H_t$.

We claim that the function $t\mapsto \pa y(t)\pa$ is continuous on $[0, 1]$ for
every $y\in V$. Let $y\in V$. Say, $y=\sum^m_{j=1}\lambda_jx_{f_j}$ for some pairwise distinct $f_1, \dots, f_m$ in
$S$ and some $\lambda_1, \dots, \lambda_m$ in $\Cb$. Then the map $t\mapsto y(t)=\sum^m_{j=1}\lambda_jx_{f_j}(t)\in H$ is continuous
on $(0, 1]$. Thus the function $t\mapsto \pa y(t)\pa$ is continuous on $(0, 1]$.
When $t$ is small enough, $x_{f_1}(t), \dots, x_{f_m}(t)$ are orthonormal and hence $\pa y(t)\pa=(\sum^{m}_{j=1}|\lambda_j|^2)^{1/2}$.
Thus the function $t\mapsto \pa y(t)\pa$ is also continuous at $t=0$. This proves the claim.

Since $V$ satisfies the conditions (i), (ii), (iii) in the definition of continuous fields of Banach spaces (with $\Gamma$ replaced by $V$),
by \cite[Proposition 10.2.3]{Dixmier} one has the continuous field of Hilbert spaces $((H_t)_{t\in [0, 1]}, \Gamma)$ over $[0, 1]$, where
$\Gamma$ is the set of all sections $x\in \prod_{t\in [0, 1]}H_t$ such that for every $t\in [0, 1]$ and every $\varepsilon>0$ there exists
 an $x'\in V$ with $\pa x'(t')-x(t')\pa<\varepsilon$ for all $t'$ in some neighborhood of $t$.
\end{proof}

\begin{lemma} \label{infinite value:lemma}
Let $Z$ be an infinite compact Hausdorff space. Then there exists a real-valued continuous function
$f$ on $Z$ such that $f(Z)$ is infinite.
\end{lemma}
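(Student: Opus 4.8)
The plan is to build the desired function explicitly, so I would first isolate the combinatorial heart of the problem: \emph{producing a countably infinite family of pairwise disjoint nonempty open subsets of $Z$ together with a point chosen in each}. Once such a family $(U_n)_{n\in\Nb}$ with points $x_n\in U_n$ is in hand, the rest is routine. Since $Z$ is compact Hausdorff it is normal, so by Urysohn's lemma I can choose, for each $n$, a continuous $g_n\colon Z\rightarrow [0,1]$ with $g_n(x_n)=1$ and $g_n\equiv 0$ on $Z\setminus U_n$. The disjointness of the $U_n$ (which contain the supports of the $g_n$) will be the key structural feature exploited below.

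To manufacture the disjoint open sets I would use compactness only through the fact that an infinite compact space has an accumulation point. Since $Z$ is infinite, pick a countably infinite set of distinct points; by compactness it has an accumulation point $w$, and because $Z$ is $T_1$ every neighborhood of $w$ contains infinitely many points. I would then construct $U_n$ and $x_n$ by induction while maintaining the invariant that, at stage $n$, there is an open neighborhood $O_n$ of $w$ with $O_n$ infinite and $O_n\cap U_i=\emptyset$ for $i\le n$. Starting from $O_0=Z$, given $O_n$ I choose a point $x_{n+1}\in O_n$ with $x_{n+1}\neq w$, separate $x_{n+1}$ from $w$ by disjoint open sets $P\ni x_{n+1}$ and $Q\ni w$ using the Hausdorff property, and set $U_{n+1}=P\cap O_n$ and $O_{n+1}=Q\cap O_n$. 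Then $U_{n+1}\subseteq O_n$ is disjoint from $U_1,\dots,U_n$, while $O_{n+1}$ is an infinite neighborhood of $w$ disjoint from $U_1,\dots,U_{n+1}$, so the invariant persists. This yields pairwise disjoint nonempty open sets $U_n$ with $x_n\in U_n$.

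Finally I would set $f=\sum_{n\in\Nb}\frac{1}{n}g_n$ and check continuity via uniform convergence of the partial sums $f_N=\sum_{n\le N}\frac1n g_n$, each of which is continuous. At any point $z$ the disjointness of the supports forces at most one term $g_n(z)$ to be nonzero, so the tail $f-f_N$ is bounded in absolute value by $\frac{1}{N+1}$ everywhere; hence $f_N\rightarrow f$ uniformly and $f$ is continuous. Since $g_m(x_n)=0$ for $m\neq n$, we get $f(x_n)=\frac1n$, so $f(Z)\supseteq\{1/n:n\in\Nb\}$ is infinite, as required.

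The one place demanding genuine care is the inductive construction of the disjoint open family: I must phrase the invariant so that each new $U_{n+1}$ is automatically disjoint from all earlier ones (achieved by nesting everything inside the shrinking neighborhoods $O_n$ of the accumulation point) while keeping an \emph{infinite} neighborhood of $w$ in reserve for the next step. The analytic part—Urysohn plus uniform convergence—is then straightforward, and notably I expect to need Hausdorffness (for separating points) and normality (for Urysohn) but only the accumulation-point consequence of compactness.
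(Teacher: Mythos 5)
Your proof is correct, but it takes a genuinely different route from the paper's. The paper argues by contradiction: assuming every real-valued continuous function on $Z$ has finite image, it repeatedly splits $Z$ along the finite image of a non-constant function to extract pairwise disjoint nonempty \emph{clopen} sets $W_1, W_2, \dots$, and then the function equal to $1/n$ on $W_n$ and $0$ off $\bigcup_n W_n$ is continuous (clopenness plus $1/n\to 0$ makes this immediate) yet has infinite image, contradicting the hypothesis. You instead construct the function outright: compactness gives an accumulation point $w$ of a countably infinite subset, the $T_1$ property makes every neighborhood of $w$ infinite, and your nested induction with Hausdorff separation produces pairwise disjoint nonempty \emph{open} sets $U_n$ with marked points $x_n$, after which Urysohn bumps $g_n$ and the sum $f=\sum_n \frac{1}{n}g_n$ finish the job --- the disjointness-based tail bound $1/(N+1)$ giving uniform convergence is right, and your slightly loose phrase that the $U_n$ ``contain the supports'' is harmless since all you actually use is $\{g_n\neq 0\}\subseteq U_n$, which forces at most one nonzero term per point and yields $f(x_n)=1/n$. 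The trade-off is instructive: the paper's clopen sets exist only under the absurd hypothesis (a connected $Z$ such as $[0,1]$ has no nontrivial clopen sets at all), so its witness function lives entirely inside the reductio, whereas your direct construction exhibits an explicit $f$ in any infinite compact Hausdorff space; the price is the Urysohn/uniform-convergence machinery and the inductive separation argument, which the paper's clopen combinatorics sidesteps. Both proofs ultimately rest on the same device, a function taking value $1/n$ on the $n$-th member of an infinite disjoint family, and your version is complete as written.
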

\begin{proof} Suppose that every real-valued continuous function on $Z$  has finite image.
Take a non-constant real-valued continuous function $h$ on $Z$. Say, $h(Z)=B\cup D$
with both $B$ and $D$ being nonempty finite sets. Then at least one of $h^{-1}(B)$ and
$h^{-1}(D)$ is infinite. Say, $h^{-1}(D)$ is infinite.
Set $W_1=h^{-1}(B)$. Then $W_1$ and $Z\setminus W_1$
are both nonempty closed and open subsets of $Z$, and $Z\setminus W_1$ is infinite.

Since
every real-valued continuous function $g$ on $Z\setminus W_1$ extends to a real-valued continuous function on $Z$,
$g$ must have finite image. Applying the above argument to $Z\setminus W_1$ we can find $W_2\subseteq Z\setminus W_1$ such
that both $W_2$ and $Z\setminus (W_1\cup W_2)$ are nonempty closed and open subsets of $Z\setminus W_1$,  and $Z\setminus (W_1\cup W_2)$ is infinite.
Inductively, we find pairwise disjoint nonempty closed and open subsets $W_1, W_2, W_3, \dots$ of $Z$. Now define
$f$ on $Z$ by $f(z)=1/n$ if $z\in W_n$ for some $n\in \Nb$ and $f(z)=0$ if $z\in Z\setminus \cup^{\infty}_{n=1}W_n$.
Then $f$ is a continuous function on $Z$ and $f(Z)$ is infinite, contradicting our assumption. Therefore there exists
a real-valued continuous function on $Z$ with infinite image.
\end{proof}

\begin{proposition} \label{field on compact:prop}
Let $Z$ be an infinite compact Hausdorff space. Then there exist a continuous field
of Hilbert spaces $((H_z)_{z\in Z}, \Gamma)$ over $Z$, a countable subset $W\subseteq Z$,
and a point $z_{\infty}\in \overline{W}\setminus W$ such that $H_z$ is separable for every $z\in W$
while $H_{z_{\infty}}$ is nonseparable.
\end{proposition}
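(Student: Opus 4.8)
The plan is to reduce to the interval case of Lemma~\ref{field on interval:lemma} by pulling back a continuous field along a continuous map $\phi : Z \to [0,1]$ with infinite image. To build $\phi$, I would apply Lemma~\ref{infinite value:lemma} to get a continuous real-valued $f$ on $Z$ with infinite image; compactness of $Z$ makes $f(Z)$ a compact infinite subset of $\Rb$, and composing with an affine map I may assume $\phi := f$ maps into $[0,1]$ with $\phi(Z)$ an infinite compact subset of $[0,1]$. Such a set has an accumulation point, which lies in $\phi(Z)$ by closedness; call it $s$, and pick distinct $t_n \in \phi(Z)\setminus\{s\}$ with $t_n\to s$ together with preimages $z_n\in Z$, $\phi(z_n)=t_n$. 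The $z_n$ are pairwise distinct (they have distinct $\phi$-values), so $W:=\{z_n:n\in\Nb\}$ is countably infinite. Since $Z$ is compact the sequence $(z_n)$ has a cluster point $z_\infty$, and $\phi(z_\infty)=s$ by continuity; hence $z_\infty\in\overline{W}$, while $z_\infty\neq z_n$ because $\phi(z_\infty)=s\neq t_n$, so $z_\infty\in\overline{W}\setminus W$.

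I would then apply Lemma~\ref{field on interval:lemma} to this $s$, obtaining a continuous field $((K_t)_{t\in[0,1]},\Gamma_K)$ over $[0,1]$ with $K_t$ separable for $t\neq s$ and $K_s$ nonseparable, and pull it back along $\phi$. Explicitly, set $H_z:=K_{\phi(z)}$ and let $V:=\{z\mapsto x(\phi(z)):x\in\Gamma_K\}\subseteq\prod_{z\in Z}H_z$. Then $V$ is a linear subspace satisfying conditions (i)--(iii): the set $\{x(\phi(z)):x\in\Gamma_K\}$ is dense in $K_{\phi(z)}=H_z$ by (ii) for $\Gamma_K$, and $z\mapsto\|x(\phi(z))\|$ is continuous since $t\mapsto\|x(t)\|$ is continuous and $\phi$ is continuous. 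By \cite[Proposition 10.2.3]{Dixmier}, $V$ extends to a continuous field $((H_z)_{z\in Z},\Gamma)$ over $Z$.

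It then remains only to read off separability: for $z\in W$ one has $H_z=K_{t_n}$ with $t_n\neq s$, hence separable, whereas $H_{z_\infty}=K_s$ is nonseparable, so $((H_z)_{z\in Z},\Gamma)$, $W$, and $z_\infty$ are the desired data. The one step requiring care is placing $z_\infty$ in $\overline{W}\setminus W$: this is precisely why $z_\infty$ must be taken as a cluster point of $(z_n)$ rather than an arbitrary preimage of $s$, and it is the only place where compactness of $Z$ (ensuring the cluster point exists) enters. The affine normalization, the pullback of the field, and the verification of (i)--(iii) are all routine.
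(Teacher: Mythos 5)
Your proof is correct and follows essentially the same route as the paper: Lemma~\ref{infinite value:lemma} to get a map onto an infinite compact subset of $[0,1]$, a sequence $t_n\to s$ with preimages forming $W$, Lemma~\ref{field on interval:lemma} at $s$, and a pullback of the field via \cite[Proposition 10.2.3]{Dixmier}. The only cosmetic difference is that you locate $z_\infty$ as a cluster point of $(z_n)$, while the paper picks $z_\infty\in\overline{W}$ with $f(z_\infty)=t_\infty$ using $f(\overline{W})=\overline{\{t_n:n\in\Nb\}}$; both are the same compactness argument.
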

\begin{proof} By Lemma~\ref{infinite value:lemma} we can find a continuous map $f: Z\rightarrow [0, 1]$ such
that $f(Z)$ is infinite. Note that $f(Z)$ is a compact metrizable space. Thus we can find
a convergent sequence $\{t_n\}_{n\in \Nb}$ in $f(Z)$ such that its limit, denoted by $t_{\infty}$, is not equal to
$t_n$ for any $n\in \Nb$. For each $n\in \Nb$ take $z_n\in f^{-1}(t_n)$. Set $W=\{z_n: n\in \Nb\}$.
Then $W$ is countable and $f(\overline{W})=\overline{\{t_n: n\in \Nb\}}\ni t_{\infty}$. Take
$z_{\infty}\in \overline{W}$ with $f(z_{\infty})=t_{\infty}$. Then $z_{\infty}\not \in W$.

By Lemma~\ref{field on interval:lemma} we can find a continuous field of Hilbert spaces
$((H'_t)_{t\in [0, 1]}, \Gamma')$ over $[0, 1]$ such that $H'_t$ is separable for every $t\in [0, 1]\setminus \{t_{\infty}\}$
while $H'_{t_{\infty}}$ is nonseparable. Set $H_z=H'_{f(z)}$ for each $z\in Z$. Then $H_z$ is separable for
every $z\in W$ while $H_{z_{\infty}}$ is nonseparable. For each $\gamma \in \Gamma'$ set $x_{\gamma}\in \prod_{z\in Z}H_z$
by $x_{\gamma}(z)=\gamma(f(z))\in H'_{f(z)}=H_z$ for all $z\in Z$. Then $V:=\{x_{\gamma}\in \prod_{z\in Z}H_z: \gamma \in \Gamma'\}$ is
a linear subspace of $\prod_{z\in Z}H_z$ satisfying the conditions (i), (ii), (iii)
in the definition of continuous fields of Banach spaces (with $\Gamma$ replaced by $V$).
By \cite[Proposition 10.2.3]{Dixmier} one has the continuous field of Hilbert spaces $((H_z)_{z\in Z}, \Gamma)$ over $Z$, where
$\Gamma$ is the set of all sections $x\in \prod_{z\in Z}H_z$ such that for every $z\in Z$ and every $\varepsilon>0$ there exists
 an $x'\in V$ with $\pa x'(z')-x(z')\pa<\varepsilon$ for all $z'$ in some neighborhood of $z$.
\end{proof}

\section{Proof of Theorem~\ref{main:thm}} \label{proof:sec}

In this section we prove Theorem~\ref{main:thm}.

Recall that given a $C^*$-algebra $A$, a {\it (right) Hilbert $A$-module} is a right $A$-module $X_A$ with
an $A$-valued inner product map $\left<\cdot, \cdot\right>_A: X_A\times X_A\rightarrow A$ such that:
\begin{enumerate}
\item[(i)] $\left<\cdot, \cdot\right>_A$ is $\Cb$-linear in the second variable,

\item[(ii)] $\left<x, ya\right>_A=\left<x, y\right>_Aa$ for all $x, y\in X_A$ and $a\in A$,

\item[(iii)] $\left<y, x\right>_A=(\left<x, y\right>_A)^*$ for all $x, y\in X_A$,

\item[(iv)] $\left<x, x\right>_A\ge 0$ in $A$ for every $x\in X_A$, and $\left<x, x\right>_A=0$ only when $x=0$,

\item[(v)] $X_A$ is complete under the norm $\pa x\pa:=\pa \left<x, x\right>_A\pa^{1/2}$.
\end{enumerate}
Two Hilbert $A$-modules are said to be {\it isomorphic} if there is an $A$-module isomorphism between them preserving
the $A$-valued inner products.
We refer the reader to \cite{Lance, MT, RW, WO}
for the basics of Hilbert $C^*$-modules.

We give a characterization of frames avoiding von Neumann algebras.

\begin{proposition} \label{frame:prop}
Let $A$ be a unital $C^*$-algebra and let $X_A$ be a Hilbert $A$-module. Let $\{x_j: j\in J\}$ be a set of elements in $X_A$.
Then $\{x_j: j\in J\}$ is a frame of $X_A$ if and only if there is a real constant $C>0$ such that
\begin{eqnarray} \label{frame:eq}
C\varphi(\left<x, x\right>_A)\le \sum_{j\in J}\varphi(\left<x, x_j\right>_A\left<x_j, x\right>_A)\le C^{-1}\varphi(\left<x, x\right>_A)
\end{eqnarray}
for every $x\in X_A$ and every state $\varphi$ of $A$.
\end{proposition}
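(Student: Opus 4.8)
The plan is to reduce the defining condition (1), stated in terms of ultraweak convergence in $A^{**}$ and an operator inequality, to the scalar inequality (2) tested against all states. The bridge between the two formulations is the standard fact that for a self-adjoint element $a$ of a unital $C^*$-algebra $A$, one has $a \ge 0$ if and only if $\varphi(a) \ge 0$ for every state $\varphi$ of $A$; more generally, an operator inequality $b \le a$ holds in $A$ exactly when $\varphi(b) \le \varphi(a)$ for all states $\varphi$. The same separation principle holds in the von Neumann algebra $A^{**}$, and crucially the states of $A$ are precisely the restrictions to $A$ of the normal states of $A^{**}$, which are ultraweakly continuous. So my first step is to fix $x \in X_A$, write $a = \left<x,x\right>_A$ and, for each finite subset $F \subseteq J$, set $s_F = \sum_{j\in F}\left<x,x_j\right>_A\left<x_j,x\right>_A$. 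Each $s_F$ is a positive element of $A$, and the net $(s_F)$ is increasing in $F$.

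For the forward direction I assume $\{x_j\}$ is a frame. Then by definition $s_F$ converges ultraweakly to some $s \in A^{**}$ and $C a \le s \le C^{-1} a$ in $A^{**}$. Applying any state $\varphi$ of $A$, extended to a normal state $\tilde\varphi$ of $A^{**}$, ultraweak convergence gives $\tilde\varphi(s) = \lim_F \tilde\varphi(s_F) = \lim_F \varphi(s_F) = \sum_{j\in J}\varphi(\left<x,x_j\right>_A\left<x_j,x\right>_A)$, where the last sum is a convergent net of nonnegative reals; the two-sided operator inequality then yields (2) upon applying $\tilde\varphi$. The reverse direction is the more delicate one and I expect it to be the main obstacle. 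Here I am given (2) for all states and I must recover both the ultraweak convergence of $(s_F)$ in $A^{**}$ and the operator inequality (1). The key point is that the increasing net $(s_F)$ of positive elements is norm-bounded: the right-hand inequality in (2), together with $\varphi(s_F) \le \varphi(s_J) \le C^{-1}\varphi(a)$ for every finite $F$ and every state, gives $s_F \le C^{-1} a$ in $A$ (again by the state-separation principle, since $\varphi(C^{-1}a - s_F) \ge 0$ for all $\varphi$), whence $\|s_F\| \le C^{-1}\|a\|$ uniformly.

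With the uniform bound in hand, the net $(s_F)$ is a bounded increasing net of positive operators in the von Neumann algebra $A^{**}$, and such a net converges ultraweakly to its supremum $s \in A^{**}$ by the standard monotone-convergence property of von Neumann algebras. This secures the convergence requirement in the definition of a frame. It remains to verify the operator inequality $C a \le s \le C^{-1} a$ in $A^{**}$: for any normal state $\tilde\varphi$, normality gives $\tilde\varphi(s) = \lim_F \tilde\varphi(s_F) = \sup_F \varphi(s_F) = \sum_{j\in J}\varphi(\left<x,x_j\right>_A\left<x_j,x\right>_A)$, which by hypothesis (2) lies between $C\varphi(a)$ and $C^{-1}\varphi(a) = C\tilde\varphi(a)$ and $C^{-1}\tilde\varphi(a)$ respectively. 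Since the normal states separate points and detect positivity in $A^{**}$, this gives the desired two-sided inequality, completing the equivalence. The care needed is only to keep straight that the summation index $j$ runs over all of $J$ while the operator estimates are obtained along finite subnets and then passed to the limit via normality.
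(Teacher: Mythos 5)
Your proof is correct and follows essentially the same route as the paper: extend states of $A$ to normal states of $A^{**}$ for the forward direction, and for the converse fix $x$, use state-separation to get $s_F\le C^{-1}\left<x,x\right>_A$, invoke monotone convergence of the bounded increasing net in $A^{**}$ to obtain the ultraweak limit, and test against normal states. The only cosmetic difference is that the paper justifies the ultraweak convergence via a faithful representation amplified to $H^{\infty}$ together with \cite[Lemma 5.1.4]{KR}, whereas you appeal directly to the standard monotone-convergence property of von Neumann algebras (equivalently, your explicit bound $\pa s_F\pa\le C^{-1}\pa\left<x,x\right>_A\pa$ lets the weak and ultraweak topologies coincide on the net), which is the same fact in different packaging.
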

\begin{proof}
Suppose that $\{x_j: j\in J\}$ is a frame of $X_A$. Let $C$ be a constant witnessing (\ref{frame0:eq}).
Then every state $\varphi$ of $A$ extends uniquely to a normal state of $A^{**}$, which we still denote by $\varphi$.
Applying $\varphi$ to (\ref{frame0:eq}) we obtain (\ref{frame:eq}). This proves the ``only if" part.

Now suppose that (\ref{frame:eq}) is satisfied for every $x\in X_A$ and every state $\varphi$ of $A$. Let $x\in X_A$.
Note that $\left<x, x_j\right>_A\left<x_j, x\right>_A=(\left<x_j, x\right>_A)^*\left<x_j, x\right>_A\ge 0$
for every $j\in J$.
For any finite subset $F$ of $J$, from (\ref{frame:eq}) we get
$$\varphi(\sum_{j\in F}\left<x, x_j\right>_A\left<x_j, x\right>_A)\le \varphi(C^{-1}\left<x, x\right>_A)$$
for every state $\varphi$ of $A$, and hence
$$ \sum_{j\in F}\left<x, x_j\right>_A\left<x_j, x\right>_A\le C^{-1}\left<x, x\right>_A.$$
Thus the monotone increasing net $\{\sum_{j\in F}\left<x, x_j\right>_A\left<x_j, x\right>_A\}_F$, for $F$ being
finite subsets of $J$ ordered by inclusion, of self-adjoint
elements in $A^{**}$ is bounded above. Represent $A^{**}$ faithful as a von Neumann algebra on
some Hilbert space $H$. Then we may also represent $A^{**}$ naturally as a von Neumann algebra on
the Hilbert space $H^{\infty}=\oplus_{n\in \Nb}H$. By \cite[Lemma 5.1.4]{KR} the above net converges in the weak operator topology
of $B(H^{\infty})$
to some element $a$ of $A^{**}$.
Since the weak operator topology on $B(H^{\infty})$ restricts to the ultraweak operator topology on $A^{**}$, we see
that the above net converges to $a$ in the ultraweak operator topology.
Then (\ref{frame:eq}) tells us that
$$  C\varphi(\left<x, x\right>_A)\le \varphi(a)\le C^{-1}\varphi(\left<x, x\right>_A)$$
for every normal state $\varphi$ of $A^{**}$. Therefore, $C\left<x, x\right>_A\le a\le C^{-1}\left<x, x\right>_A$ as desired.
This finishes the proof of the ``if" part.
\end{proof}

Let $((H_z)_{z\in Z}, \Gamma)$ be a continuous field of Hilbert spaces over a compact Hausdorff space $Z$.
We shall write the inner product on each $H_z$ as linear in the second variable and conjugate-linear in the first variable.
By \cite[Proposition 10.1.9]{Dixmier} $\Gamma$ is right $C(Z)$-module under the pointwise multiplication, i.e.,
$$ (xa)(z)=x(z)a(z)$$
for all $x\in \Gamma$, $a\in C(Z)$, and $z\in Z$.  By \cite[10.7.1]{Dixmier} for any $x, y\in \Gamma$, the function
$z\mapsto \left<x(z), y(z)\right>$ is in $C(Z)$. From the conditions (iii) and (iv) in the definition of continuous fields of Banach spaces
in Section~\ref{continuous field:sec} one sees that $\Gamma$ is a Banach space under the supremum norm $\pa x\pa:=\sup_{z\in Z}\pa x(z)\pa$.
Therefore $\Gamma$ is a Hilbert $C(Z)$-module with the pointwise $C(Z)$-valued inner product
$$ \left<x, y\right>_{C(Z)}(z)=\left<x(z), y(z)\right>$$
for all $x, y\in \Gamma$, and $z\in Z$. In fact, up to isomorphism every Hilbert $C(Z)$-module arises this way \cite[Theorem 3.12]{Takahashi},
though we won't need this fact except in the case $Z$ is finite.

\begin{lemma} \label{no frame:lemma}
Let $((H_z)_{z\in Z}, \Gamma)$ be a continuous field of Hilbert spaces over a compact Hausdorff space
$Z$. Suppose that there are  a countable subset $W\subseteq Z$
and a point $z_{\infty}\in \overline{W}\setminus W$ such that $H_z$ is separable for every $z\in W$
while $H_{z_{\infty}}$ is nonseparable. Then $\Gamma$ as a Hilbert $C(Z)$-module has no frames.
\end{lemma}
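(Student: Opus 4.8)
The plan is to argue by contradiction: suppose $\{x_j : j\in J\}$ is a frame of the Hilbert $C(Z)$-module $\Gamma$ witnessed by a constant $C>0$. The first step is to localize the frame condition at each point of $Z$. For $z\in Z$ the point evaluation $\varphi_z(a)=a(z)$ is a state of $C(Z)$, and since $\langle x,x_j\rangle_{C(Z)}\langle x_j,x\rangle_{C(Z)}$ takes the value $|\langle x(z),x_j(z)\rangle|^2$ at $z$ while $\langle x,x\rangle_{C(Z)}(z)=\|x(z)\|^2$, Proposition~\ref{frame:prop} applied with $\varphi=\varphi_z$ gives
$$C\|x(z)\|^2\le \sum_{j\in J}|\langle x(z),x_j(z)\rangle|^2\le C^{-1}\|x(z)\|^2$$
for every $x\in\Gamma$. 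Because $\{x(z):x\in\Gamma\}$ is dense in $H_z$ by condition (ii), and because the upper bound shows the analysis map $v\mapsto(\langle v,x_j(z)\rangle)_j$ to be a bounded (hence continuous) operator $H_z\to\ell^2(J)$, both inequalities extend from this dense set to all $v\in H_z$. Thus for every $z\in Z$ the family $\{x_j(z):j\in J\}$ is a classical frame of $H_z$ with bounds $C,C^{-1}$.

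Second, I exploit this at the nonseparable fiber $H_{z_\infty}$. A frame of a Hilbert space has dense linear span, for if $v$ is orthogonal to every $x_j(z_\infty)$ the lower bound forces $v=0$. Since $H_{z_\infty}$ is nonseparable, the set $\{j:x_j(z_\infty)\neq 0\}$ cannot be countable, as otherwise the span would be separable. Writing it as the union over $n$ of the sets $\{j:\|x_j(z_\infty)\|>1/n\}$, some value $\delta:=1/n$ makes $J_1:=\{j\in J:\|x_j(z_\infty)\|>\delta\}$ uncountable.

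Third comes the step I expect to be the crux: transferring this abundance from $z_\infty$ to a single point of the countable set $W$. For each $j\in J_1$, condition (iii) makes $z\mapsto\|x_j(z)\|$ continuous, so $U_j:=\{z\in Z:\|x_j(z)\|>\delta/2\}$ is an open neighborhood of $z_\infty$. As $z_\infty\in\overline{W}$, each $U_j$ meets $W$; choose $w_j\in U_j\cap W$. This produces a map $J_1\to W$, and since $J_1$ is uncountable while $W$ is countable, some $w\in W$ has uncountable preimage, giving an uncountable $J_2\subseteq J_1$ with $\|x_j(w)\|>\delta/2$ for all $j\in J_2$. The delicacy is that the neighborhoods $U_j$ depend on $j$, so one cannot pass to a naive limit; the argument must simultaneously use the lower frame bound (to manufacture uncountably many large vectors at $z_\infty$), the continuity of sections (to keep each of them large nearby), and the countability of $W$ (to force uncountably many of them to be large at one common point).

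Finally, I derive the contradiction at $w$. Since $w\in W$, the fiber $H_w$ is separable; fix a countable orthonormal basis $\{\eta_k\}$ of $H_w$. The upper (Bessel) bound gives $\sum_j|\langle\eta_k,x_j(w)\rangle|^2\le C^{-1}$ for each $k$, so for each $k$ only countably many $j$ have $\langle\eta_k,x_j(w)\rangle\neq 0$. Hence $\{j:x_j(w)\neq 0\}\subseteq\bigcup_k\{j:\langle\eta_k,x_j(w)\rangle\neq 0\}$ is a countable union of countable sets, and therefore countable. But $J_2\subseteq\{j:x_j(w)\neq 0\}$ is uncountable, which is absurd. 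Consequently $\Gamma$ admits no frame.
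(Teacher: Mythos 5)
Your proof is correct, and each step checks out: the localization via the evaluation states $\varphi_z$ in Proposition~\ref{frame:prop}, together with your density-plus-continuity extension of both inequalities, does make $\{x_j(z):j\in J\}$ a frame of $H_z$ with bounds $C, C^{-1}$ for every $z\in Z$ (the paper reaches the same conclusion slightly more directly by citing \cite[Proposition 10.1.10]{Dixmier}, which produces a section through \emph{any} prescribed vector of a fiber, so no extension from a dense subspace is needed); and your Bessel-plus-separability count showing $\{j: x_j(w)\neq 0\}$ is countable at a separable fiber is exactly the paper's argument for the countability of its sets $F_z$, $z\in W$. Where you genuinely diverge is the direction of the transfer between $W$ and $z_\infty$. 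The paper runs it qualitatively from $W$ to $z_\infty$: the union $F=\cup_{z\in W}F_z$ is countable; any $j\notin F$ has $x_j$ vanishing on all of $W$, hence, by continuity of $z\mapsto \|x_j(z)\|$ and $z_\infty\in\overline{W}$, vanishing at $z_\infty$; nonseparability of $H_{z_\infty}$ then yields a unit vector orthogonal to the countably many $x_j(z_\infty)$ with $j\in F$, hence orthogonal to all $x_j(z_\infty)$, violating the lower frame bound at $z_\infty$. You run the contrapositive, quantitatively, from $z_\infty$ to $W$: the lower bound plus nonseparability force uncountably many $j$ with $\|x_j(z_\infty)\|>\delta$ for some fixed threshold $\delta$, continuity keeps each such section large on a neighborhood of $z_\infty$ that must meet $W$, and the pigeonhole over the countable set $W$ concentrates uncountably many large vectors at a single separable fiber, contradicting the countability established there. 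The ingredients are identical---both frame bounds, continuity of section norms, and $z_\infty\in\overline{W}$ enter in both proofs---but the paper's version is shorter, needing no threshold and no pigeonhole, and it places the final contradiction at $z_\infty$; yours, by contrast, places the contradiction at a point of $W$, uses nonseparability only through the cardinality of $\{j: x_j(z_\infty)\neq 0\}$, and (as you noted yourself) correctly handles the one delicate point, namely that the neighborhoods $U_j$ vary with $j$, by trading uniformity for a counting argument.
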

\begin{proof}
Suppose that $\{x_j:j\in J\}$ is a frame of $\Gamma$.
By Proposition~\ref{frame:prop} there is a real constant $C>0$ such that
the inequality (\ref{frame:eq}) holds for
every $x\in X_A$ and every state $\varphi$ of $A$.
For each $z \in Z$ denote by $\varphi_z$ the
state of $C(Z)$ given by evaluation at $z$.
For any $z\in Z$ and any vector $w\in H_z$,
by \cite[Proposition 10.1.10]{Dixmier} we can find $x\in \Gamma$ with $x(z)=w$.
Taking $\varphi=\varphi_z$
in the inequality (\ref{frame:eq}),
we get
\begin{eqnarray} \label{at n:eq}
 C\pa w\pa^2\le \sum_{j\in J}|\left<x_j(z), w\right>|^2\le C^{-1}\pa w\pa^2.
\end{eqnarray}

For each $z\in Z$ let $S_z$ be an orthonormal basis of $H_z$. For
each $w\in S_z$, from (\ref{at n:eq}) we see that the set $F_w:=\{j\in J: \left<x_j(z), w\right>\neq 0\}$
is countable. Note that the set $F_z:=\{j\in J: x_j(z)\neq 0\}$ is exactly $\cup_{w\in S_z}F_w$.
For each $z\in W$, since $H_z$ is separable, $S_z$ is countable and hence $F_z$ is countable.
Then the set $F:=\cup_{z\in W}F_z$ is countable.

Since $H_{z_{\infty}}$ is nonseparable, we can find a unit vector $w\in H_{z_{\infty}}$ orthogonal to $x_j(z_{\infty})$ for
all $j\in F$.
If $j\in J\setminus F$, then $x_j(z)=0$ for all $z\in W$, and hence by the condition (iii) in the definition of continuous fields of Banach spaces
in Section~\ref{continuous field:sec} we conclude that $x_j(z_{\infty})=0$. Therefore
$\left<x_j(z_{\infty}), w\right>=0$ for all $j\in J$, contradicting (\ref{at n:eq}).
Thus $\Gamma$ has no frames.
\end{proof}

For any $C^*$-algebra $A$, $A$ as a right $A$-module is a Hilbert $A$-module with the $A$-valued inner product $\left<a, b\right>_A=a^*b$
for all $a, b\in A$. Given a family $\{X_j\}_{j\in J}$ of Hilbert $A$-modules, their {\it direct sum}, denoted by $\oplus_{j\in J}X_j$, consists
of $(x_j)_{j\in J}$ in $\prod_{j\in J}X_j$ such that $\sum_{j\in J}\left<x_j, x_j\right>_A$ converges in norm, and has the $A$-valued inner product
$\left<(x_j)_{j\in J}, (y_j)_{j\in J}\right>_A:=\sum_{j\in J}\left<x_j, y_j\right>_A$.

We are ready to prove Theorem~\ref{main:thm}.

\begin{proof}[Proof of Theorem~\ref{main:thm}.] (1)$\Rightarrow$(2): Suppose that $A$ is finite-dimensional.
Then $A=C(Z)$ for a finite discrete space $Z$. For each $z\in Z$ denote by $p_z$ the projection in $C(Z)$ with
$p_z(z')=\delta_{z, z'}$ for all $z'\in Z$.
Let $X_A$ be a Hilbert $A$-module. For any $z\in Z$ and any $xp_z, yp_z\in X_Ap_z$, one has
$\left<xp_z, yp_z\right>_A\in Ap_z=\Cb p_z$. Thus $\left<xp_z, yp_z\right>_A=\lambda p_z$ for some $\lambda\in \Cb$.
Set $\left<xp_z, yp_z\right>=\lambda$. Then it is easily checked that $X_Ap_z$ is a Hilbert space under this inner product,
$((X_Ap_z)_{z\in Z}, \prod_{z\in Z}X_Ap_z)$ is a continuous field of Hilbert spaces over $Z$, and
$X_A$ is isomorphic to $  \prod_{z\in Z}X_Ap_z$ as Hilbert $A$-modules.
Take an infinite-dimensional Hilbert space $H$ such that the Hilbert space dimension of $H$ is no less than that
of $X_Ap_z$ for all $z\in Z$. Then $X_Ap_z\oplus H$ is unitary equivalent to $H$ as Hilbert spaces.
It is readily checked that $((H)_{z\in Z}, \prod_{z\in Z}H)$ is a continuous field of Hilbert spaces over $Z$,
and $(\prod_{z\in Z}X_Ap_z)\oplus (\prod_{z\in Z}H)$ is isomorphic to $\prod_{z\in Z}H$ as Hilbert $A$-modules.
Let $J$ be an orthonormal basis of $H$. Then it is easy to see that $\prod_{z\in Z}H$ and $\oplus_{j\in J}A_A$ are
isomorphic as Hilbert $A$-modules.  Therefore $X_A\oplus (\oplus_{j\in J}A_A)$ and $\oplus_{j\in J}A_A$ are isomorphic as Hilbert $A$-modules.
This proves (1)$\Rightarrow$(2).

The implications (2)$\Rightarrow$(3) and (4)$\Rightarrow$(5) are trivial.

The implication (3)$\Rightarrow$(4) was proved in \cite[Example 3.5]{FL02}. For the convenience of the reader,
we indicate the proof briefly here. Suppose that $X_A$ and $Y_A$ are Hilbert $A$-modules and $X_A\oplus Y_A$ is isomorphic
to $\oplus_{j\in J}A_A$ for some set $J$ as Hilbert $A$-modules. We may assume that $X_A\oplus Y_A=\oplus_{j\in J}A_A$.
Denote by $P$ the orthogonal projection $\oplus_{j\in J}A_A\rightarrow X_A$ sending $x+y$ to $x$ for all
$x\in X_A$ and $y\in Y_A$. For each $s\in J$ denote by $e_s$ the vector in $\oplus_{j\in J}A_A$ with coordinate $1_A\delta_{j, s}$
at each $j\in J$. Set $x_j=P(e_j)$ for each $j\in J$. For any $x\in X_A$, say, $x=\sum_{j\in J}e_ja_j$ with $a_j\in A$ for
each $j\in J$, one has
\begin{eqnarray*}
\left<x, x\right>_A&=&\sum_{j\in J}a^*_ja_j=\sum_{j\in J}\left<x, e_j\right>_A\left<e_j, x\right>_A
=\sum_{j\in J}\left<Px, e_j\right>_A\left<e_j, Px\right>_A\\
&=&\sum_{j\in J}\left<x, Pe_j\right>_A\left<Pe_j, x\right>_A
=\sum_{j\in J}\left<x, x_j\right>_A\left<x_j, x\right>_A.
\end{eqnarray*}
Therefore $\{x_j: j\in J\}$ is a standard frame of $X_A$. This proves (3)$\Rightarrow$(4).

The implication (5)$\Rightarrow$(1) follows from Proposition~\ref{field on compact:prop} and Lemma~\ref{no frame:lemma}.
\end{proof}


\end{document}